\definecolor{shadecolor}{RGB}{248,248,248}
\newenvironment{Shaded}{\begin{snugshade}}{\end{snugshade}}
\newcommand{\AttributeTok}[1]{\textcolor[rgb]{0.13,0.29,0.53}{#1}}
\newcommand{\ConstantTok}[1]{\textcolor[rgb]{0.56,0.35,0.01}{#1}}
\newcommand{\ControlFlowTok}[1]{\textcolor[rgb]{0.13,0.29,0.53}{\textbf{#1}}}
\newcommand{\DecValTok}[1]{\textcolor[rgb]{0.00,0.00,0.81}{#1}}
\newcommand{\FloatTok}[1]{\textcolor[rgb]{0.00,0.00,0.81}{#1}}
\newcommand{\FunctionTok}[1]{\textcolor[rgb]{0.13,0.29,0.53}{\textbf{#1}}}
\newcommand{\NormalTok}[1]{#1}
\newcommand{\OtherTok}[1]{\textcolor[rgb]{0.56,0.35,0.01}{#1}}
\newcommand{\SpecialCharTok}[1]{\textcolor[rgb]{0.81,0.36,0.00}{\textbf{#1}}}
\newcommand{\StringTok}[1]{\textcolor[rgb]{0.31,0.60,0.02}{#1}}
\patchcmd\longtable{\par}{\if@noskipsec\mbox{}\fi\par}{}{}
\newsavebox\pandoc@box
\newcommand*\pandocbounded[1]{
  \sbox\pandoc@box{#1}%
  \Gscale@div\@tempa{\textheight}{\dimexpr\ht\pandoc@box+\dp\pandoc@box\relax}%
  \Gscale@div\@tempb{\linewidth}{\wd\pandoc@box}%
  \ifdim\@tempb\p@<\@tempa\p@\let\@tempa\@tempb\fi
  \ifdim\@tempa\p@<\p@\scalebox{\@tempa}{\usebox\pandoc@box}%
  \else\usebox{\pandoc@box}%
  \fi%
}
\def\fps@figure{htbp}
\NewDocumentCommand\citeproctext{}{}
 \let\@cite@ofmt\@firstofone
 \def\@biblabel#1{}
 \def\@cite#1#2{{#1\if@tempswa , #2\fi}}
\newlength{\cslhangindent}
\newlength{\csllabelwidth}
\newenvironment{CSLReferences}[2] 
 {\begin{list}{}{%
  \setlength{\itemindent}{0pt}
  \setlength{\leftmargin}{0pt}
  \setlength{\parsep}{0pt}
  \ifodd #1
   \setlength{\leftmargin}{\cslhangindent}
   \setlength{\itemindent}{-1\cslhangindent}
  \fi
  \setlength{\itemsep}{#2\baselineskip}}}
 {\end{list}}
\newtcolorbox{greybox}{
  colback=white,
  colframe=blue,
  coltext=black,
  boxsep=5pt,
  arc=4pt}
\title{Majorizing Stress Formula Two}
\author{Jan de Leeuw - University of California Los Angeles}
\date{Started April 19 2024, Version of July 25, 2024}
\newtheorem{theorem}{Theorem}[section]
\newtheorem{lemma}{Lemma}[section]
\theoremstyle{definition}
\theoremstyle{definition}
\theoremstyle{definition}
\theoremstyle{definition}
\theoremstyle{remark}
\begin{document}
\maketitle
\begin{abstract}
Modifications of the smacof algorithm for multidimensional scaling are proposed that provide a convergent majorization algorithm for Kruskal's stress formula two.
\end{abstract}

{
\setcounter{tocdepth}{3}
\tableofcontents
}
\textbf{Note:} This is a working paper which will be expanded/updated frequently. All suggestions for improvement are welcome.

\section{Introduction}\label{introduction}

The loss function minimized in the current non-metric and non-linear R implementations of the smacof programs for MDS
(De Leeuw and Mair (2009), Mair, Groenen, and De Leeuw (2022)) is Kruskal's original \emph{normalized stress} (Kruskal (1964a), Kruskal (1964b)). It is defined as
\begin{equation}
\sigma_1(X):=\frac{\sum\sum w_{ij}(\delta_{ij}- d_{ij}(X)^2)}{\sum\sum w_{ij}d_{ij}^2(X)}.
\label{eq:stress1def}
\end{equation}
In equation \eqref{eq:stress1def} we assume throughout that dissimilarities \(\delta_{ij}\)
and weights \(w_{ij}\) are non-negative, and, without loss of generality, that the weights add up to one. The double summation is over all pairs of indices \((i,j)\) with \(i>j\), i.e, over the
elements below the diagonal of the matrices \(\Delta\), \(W\), and \(D(X)\).

In Kruskal (1965) a different loss function was used in the context of using monotone transformations when fitting a linear model. In MDS this loss function is
\begin{equation}
\sigma_2(X):=\frac{\sum\sum w_{ij}(\delta_{ij}- d_{ij}(X)^2)}{\sum\sum w_{ij}(d_{ij}(X)-\overline{d}(X))^2},
\label{eq:stress2def}
\end{equation}
where
\begin{equation}
\overline{d}(X)=\sum\sum w_{ij}d_{ij}(X).
\label{eq:doverdef}
\end{equation}

In Kruskal and Carroll (1969), in the section written by Kruskal (p.~652), we see

\begin{quote}
In several of my scaling programs, I refer to these expressions
as ``stress formula one'' and ``stress formula two'', respectively.
Historically, stress formula one was the only badness-of-fit
function used for some time. Stress formula two has been used
more recently and I now tend to recommend it.
\end{quote}

Another early adopter (Roskam (1968), p.~34) says

\begin{quote}
While the original formula is adequate for completely ordered
B-data, we found it is not adequate with completely ordered
A-data.
\end{quote}

The distinction between A-data and B-data comes from Coombs (1964).
For B-data the \(\delta_{ij}\) are dissimilarties
between pairs of elements of a single set, while for A-data they
are dissimilarities between two different sets, a row-set and a column-set.
Moreover both Kruskal and Roskam found that having the variance
of the distances in the denominator of stress has major advantages
for conditional A-data, in which only comparisons of dissimilarities
with in the same row are meaningful.

In this paper we will extend the theory and algorithm of smacof
to stress formula two. We emphasize that normalized loss functions
such as \(\sigma_1\) and \(\sigma_2\) should are only
used in non-linear or nor-metric MDS problems. In metric MDS problems
raw stress, without any normalization, can be used.

\section{Problem}\label{problem}

We want to minimize Kruskal's \(\sigma_2\) from \eqref{eq:stress2def}
over the \(n\times p\) configuration matrices \(X\).

It is convenient to have some notation for the numerator and denominator of the two stress formulas.
\begin{subequations}
\begin{align}
\sigma_R(X)&:=\sum\sum w_{ij}(\delta_{ij}-d_{ij}(X))^2,\label{eq:rawdef}\\
\eta_1^2(X)&:=\sum\sum w_{ij}d_{ij}^2(X),\label{eq:eta1def}\\
\eta_2^2(X)&:=\sum\sum w_{ij}(d_{ij}(X)-\overline{d}(X))^2,\label{eq:eta2def}
\end{align}
\end{subequations}
Kruskal terms \(\sigma_R\) from definition \eqref{eq:rawdef} the \emph{raw stress}.

There have not been any systematic comparisons of the two stress formulas, and
the solutions they lead to, that I am aware of.
Kruskal (in Kruskal and Carroll (1969), p.~652) says

\begin{quote}
For any given configuration, of course, stress formula two yields
a substantially larger value than stress formula one, perhaps twice
as large in many cases. However, in typical multidimensional scaling
applications, minimizing stress formula two typically yields very similar
configurations to minimizing stress formula one.
\end{quote}

We can get some idea about the difference in scale of the two loss functions from the results
\begin{equation}
\frac{\sigma_1(X)}{\sigma_2(X)}=\frac{\eta_2^2(X)}{\eta_1^2(X)}\geq\min_X\frac{\eta_2^2(X)}{\eta_1^2(X)}
\label{eq:compa}
\end{equation}
De Leeuw and Stoop (1984) show that in the one-dimensional case with \(p=1\) and with
all \(w_{ij}\) equal, this implies
\begin{equation}
\sigma_1(X)\geq\frac13\frac{n-2}{n}\sigma_2(X).
\label{eq:bound}
\end{equation}
Thus in this special case \(\sigma_1\) is three to nine times as large as
\(\sigma_2\). In general the bound in equation \eqref{eq:bound} depends on the weights, on the dimensionality \(p\), and on the order \(n\) of the problem.

As a qualitative statement, supported to some extent by the computations of De Leeuw and Stoop (1984),
we can say that minimizing \(\sigma_1\) will
tend to give optimal configurations in which distances have less variance than those in
configurations that minimize \(\sigma_2\). One thing is for sure, however. If \(X\)
is a regular simplex in \(n-1\) dimensions then \(\sigma_2\) is not even defined.
Or, to put it differently, if all \(\delta_{ij}\) are equal the minimum of
\(\sigma_2\) in \(n-1\) dimensions does not exist.

\section{Notation}\label{notation}

Now for some notation. As in standard MDS theory (De Leeuw (1977), De Leeuw and Heiser (1977), De Leeuw (1988)) we use the matrices
\begin{equation}
A_{ij}:=(e_i-e_j)(e_i-e_j)',
\label{eq:adef}
\end{equation}
where \(e_i\) are unit vectors with element \(i\) equal to one and the other \(n-1\) elements equal to zero.
Thus \(A_{ij}\) has elements \((i,i)\) and \((j,j)\) equal to \(+1\), elements \((i,j)\) and \((j,i)\) equal to
\(-1\), and all other elements equal to zero. The usefulness of the \(A_{ij}\) in MDS derives mainly from the
formula
\begin{equation}
d_{ij}^2(X)=\text{tr}\ X'A_{ij}X.
\label{eq:d2froma}
\end{equation}

Using the \(A_{ij}\) we now define other matrices, also standard in MDS,
\begin{subequations}
\begin{align}
V&:=\sum\sum w_{ij}A_{ij},\label{eq:vdef}\\
B(X)&:=\sum\sum w_{ij}\frac{\delta_{ij}}{d_{ij}(X)}A_{ij}.\label{eq:bdef}
\end{align}
\end{subequations}
Note that \(B\) is a matrix-valued function, not a single matrix. For completeness
also define
\begin{equation}
\eta^2(\Delta):=\sum\sum w_{ij}\delta_{ij}^2.
\label{eq:etadeltadef}
\end{equation}

Specifically because we are dealing with \(\sigma_2\) we also need the non-standard
definition
\begin{equation}
M(X):=\overline{d}(X)\sum\sum\frac{w_{ij}}{d_{ij}(X)}A_{ij}.
\label{eq:mdef}
\end{equation}

In both definitions \eqref{eq:bdef} and \eqref{eq:mdef} the summation is
over pairs \((i,j)\) with \(d_{ij}(X)>0\). Of course we can also omit all
pairs from the summation for which \(w_{ij}=0\).

\section{Majorization}\label{majorization}

In this section we construct a convergent majorization algorithm (De Leeuw (1994))
(also known as an MM algorithm, Lange (2016)) to minimize \(\sigma_2\).

The first step is to turn the minimization of a ratio of two functions into the
iterative minimization of a difference of the two functions. This is
a classical trick in fractional programming, usually attributed to
Dinkelbach (1967). Define
\begin{equation}
\omega(X,Y):=\sum\sum w_{ij}(\delta_{ij} - d_{ij}(X))^2-\sigma(Y)\{\sum\sum w_{ij}(d_{ij}(X)-\overline{d}(X))^2\}
\label{eq:omegadef}
\end{equation}

\begin{lemma}
\protect\hypertarget{lem:dinkelbach}{}\label{lem:dinkelbach}If \(\omega(X,Y)<\omega(Y,Y)=0\) then \(\sigma(X)<\sigma(Y)\).
\end{lemma}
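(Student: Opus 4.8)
The plan is to unwind the Dinkelbach construction directly, since the lemma is nothing more than the algebraic reduction of a ratio minimization to a difference minimization, and the proof requires no convexity or majorization machinery at all. Reading $\sigma$ in \eqref{eq:omegadef} as $\sigma_2$, I would first rewrite $\omega$ in the compact notation of \eqref{eq:rawdef} and \eqref{eq:eta2def} as
\begin{equation*}
\omega(X,Y)=\sigma_R(X)-\sigma_2(Y)\,\eta_2^2(X).
\end{equation*}
With $\omega$ in this form the identity $\omega(Y,Y)=0$ is immediate from the definition $\sigma_2(Y)=\sigma_R(Y)/\eta_2^2(Y)$: substituting $X=Y$ gives $\sigma_R(Y)-\big(\sigma_R(Y)/\eta_2^2(Y)\big)\,\eta_2^2(Y)=0$, which also records that the normalization $\sum\sum w_{ij}=1$ plays no role here.

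For the implication itself I would start from the hypothesis $\omega(X,Y)<\omega(Y,Y)=0$, which by the display above reads $\sigma_R(X)<\sigma_2(Y)\,\eta_2^2(X)$. Dividing both sides by $\eta_2^2(X)$ then yields $\sigma_R(X)/\eta_2^2(X)<\sigma_2(Y)$, that is, $\sigma_2(X)<\sigma_2(Y)$, which is exactly the conclusion. This one-line rearrangement is the entire content of the lemma.

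The only genuine obstacle is the legitimacy of that division, i.e. guaranteeing $\eta_2^2(X)>0$. This positivity fails precisely when the fitted distances $d_{ij}(X)$ are all equal — the regular-simplex degeneracy already flagged after \eqref{eq:bound} — and in that case $\sigma_2(X)$ is not defined, so the statement is vacuous. I would therefore carry the standing assumption that $X$ ranges only over configurations with $\eta_2^2(X)>0$, equivalently configurations whose distances are not all equal, so that $\sigma_2(X)$ is well defined and $\eta_2^2(X)$ is strictly positive. Under that assumption the division preserves the strict inequality and the argument is complete; the lemma then serves its intended purpose of justifying the iterative replacement of $\sigma_2$ by the differences $\omega(\,\cdot\,,Y)$ that the subsequent majorization step will minimize.
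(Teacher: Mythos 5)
Your proof is correct and follows essentially the same route as the paper's: direct substitution gives $\omega(Y,Y)=0$, and dividing the inequality $\sigma_R(X)<\sigma_2(Y)\,\eta_2^2(X)$ by $\eta_2^2(X)$ yields the conclusion. Your explicit handling of the degenerate case $\eta_2^2(X)=0$ is a sensible refinement that the paper leaves implicit (it only discusses the regular-simplex degeneracy in passing), but it does not change the substance of the argument.
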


\begin{proof}
This is embarassingly simple.
Direct substitution shows \(\omega(X,X)=0\) for all \(X\). Also \(\omega(X,Y)<0\) if and only if
\begin{equation}
\sum\sum w_{ij}(\delta_{ij} - d_{ij}(X))^2<\sigma(Y)\{\sum\sum w_{ij}(d_{ij}(X)-\overline{d}(X))^2\}
\label{eq:dinkelbach}
\end{equation}
Dividing both sides by \(\{\sum\sum w_{ij}(d_{ij}(X)-\overline{d}(X))^2\}\) shows that \(\sigma(X)<\sigma(Y)\).
\end{proof}

It follows from lemma \ref{lem:dinkelbach} that if we are in iteration \(k\), with tentative solution \(X^{(k)}\), then finding any \(X^{(k+1)}\) such that
\(\omega(X^{(k+1)},X^{(k)})<0\) will decrease stress. We will accomplish this
in our algorithm by performing one or more majorization steps decreasing
\(\omega(X,X^{(k)})\).

We should note that as a general strategy we cannot use finding \(X^{(k+1)}\)
by minimizing \(\omega(X,X^{(k)})\) over \(X\). If the minimum exists this will
work, but in general \(\omega(\bullet,X^{(k)})\) may be unbounded below, and
the minimum may not exist. This is easily seen from the example \(f(x)=x'Ax/x'x\)
for which Dinkelbach's maneuver gives \(g(x,y)=x'Ax-f(y)x'x\). The minimum
of \(g\) over \(x\) is zero if \(f(y)\) is equal to \(\lambda_{\min}(A)\), the smallest eigenvalue
of \(A\), which is actually the minimum of \(f\). If \(f(y)>\lambda_{min}(A)\) the minimum
does not exist (the infimum is \(-\infty\)). We can ignore the case \(f(y)<\lambda_{min}(A)\).
because that is impossible. But if \(f(y)>\lambda_{min}(A)\) any \(x\) with \(x'x=1\) other than
the eigenvector corresponding with the minimum eigenvalue satisfies \(g(x,y)<0\)
and thus \(f(x)<f(y)\).

Back to \(\sigma_2\). From definitions \eqref{eq:vdef}, \eqref{eq:bdef}, \eqref{eq:etadeltadef}, and \eqref{eq:mdef}
\begin{equation}
\omega(X,Y)=\eta^2(\Delta)+(1-\sigma(Y))\text{tr}\ X'VX-2\ \text{tr}\ X'B(X)X+\text{tr}\ X'M(X)X
\label{eq:omegasimple}
\end{equation}

\begin{lemma}
\protect\hypertarget{lem:smacof}{}\label{lem:smacof}For all \(X\) and \(Y\)
\begin{equation}
\text{tr}\ X'B(X)X\geq\text{tr}\ X'B(Y)Y,
\label{eq:smacof}
\end{equation}
with equality if \(X=Y\).
\end{lemma}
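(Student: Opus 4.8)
The plan is to reduce the matrix traces to scalar sums over the index pairs and then apply the Cauchy--Schwarz inequality term by term. First I would exhibit the bilinear form underlying \eqref{eq:d2froma}: writing $u_{ij}:=e_i-e_j$ so that $A_{ij}=u_{ij}u_{ij}'$, the cyclic property of the trace gives $\text{tr}\ X'A_{ij}Y=(u_{ij}'X)(Y'u_{ij})$, which is precisely the inner product of the $i,j$ row-difference vectors of $X$ and $Y$. Taking $Y=X$ recovers $\text{tr}\ X'A_{ij}X=d_{ij}^2(X)$.

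Substituting the definition \eqref{eq:bdef} of $B$, the left-hand side collapses because the denominator $d_{ij}(X)$ cancels one power of the distance:
\begin{equation*}
\text{tr}\ X'B(X)X=\sum\sum w_{ij}\frac{\delta_{ij}}{d_{ij}(X)}\,d_{ij}^2(X)=\sum\sum w_{ij}\delta_{ij}\,d_{ij}(X).
\end{equation*}
The right-hand side does not collapse; it keeps the cross term
\begin{equation*}
\text{tr}\ X'B(Y)Y=\sum\sum w_{ij}\frac{\delta_{ij}}{d_{ij}(Y)}\,(u_{ij}'X)(Y'u_{ij}).
\end{equation*}

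The key inequality is Cauchy--Schwarz applied to each row-difference pair, $(u_{ij}'X)(Y'u_{ij})\leq d_{ij}(X)\,d_{ij}(Y)$, so that after dividing by $d_{ij}(Y)>0$ the summand on the right is bounded above by $w_{ij}\delta_{ij}\,d_{ij}(X)$, the corresponding summand on the left. Since the weights $w_{ij}$ and dissimilarities $\delta_{ij}$ are non-negative, these term-by-term inequalities survive summation, which yields \eqref{eq:smacof}. Equality in Cauchy--Schwarz holds exactly when the row-difference vectors are proportional, and in particular when $X=Y$ each summand becomes an identity, establishing the stated equality case.

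There is essentially no obstacle here: the lemma is the classical smacof majorization inequality, and the sole analytic ingredient is Cauchy--Schwarz. The one point deserving care is the restriction of the summation in \eqref{eq:bdef} to pairs with positive distance, so that the divisions are legitimate. A pair with $d_{ij}(Y)=0$ is excluded from $B(Y)$ and contributes nothing on the right, while it contributes only the non-negative quantity $w_{ij}\delta_{ij}\,d_{ij}(X)$ on the left; conversely a pair with $d_{ij}(X)=0$ forces $u_{ij}'X=0$, so its right-hand summand vanishes as well. Hence the excluded pairs never violate the inequality, and the term-by-term comparison is valid throughout.
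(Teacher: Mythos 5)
Your proof is correct and follows essentially the same route as the paper: the paper's entire argument is the Cauchy--Schwarz bound $d_{ij}(X)\geq \frac{1}{d_{ij}(Y)}\,\text{tr}\ X'A_{ij}Y$ applied termwise, multiplied by $w_{ij}\delta_{ij}$, and summed, which is exactly your term-by-term comparison written out in full. Your additional care with the pairs where $d_{ij}(X)=0$ or $d_{ij}(Y)=0$ is a welcome refinement that the paper glosses over, but it is added rigor rather than a different method.
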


\begin{proof}
By Cauchy-Schwartz
\begin{equation}
d_{ij}(X)\geq\frac{1}{d_{ij}(Y)}\text{tr}\ X'A_{ij}Y
\label{eq:cs1}
\end{equation}
Multiplying both sides by \(w_{ij}\delta_{ij}\) and summing proves the lemma.
\end{proof}

\begin{lemma}
\protect\hypertarget{lem:neweq}{}\label{lem:neweq}For all \(X\) and \(Y\)
\begin{equation}
\text{tr}\ X'M(X)X\leq\text{tr}\ X'M(Y)X,
\label{eq:secineq}
\end{equation}
with equality if \(X=Y\).
\end{lemma}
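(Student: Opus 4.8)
The plan is to evaluate both quadratic forms explicitly using the fundamental identity \eqref{eq:d2froma}, and then to recognize the surviving inequality as nothing more than Cauchy-Schwartz, in direct parallel with the proof of Lemma \ref{lem:smacof}. The key observation is that the non-standard matrix \(M\) of \eqref{eq:mdef} is engineered precisely so that its ``self'' quadratic form collapses to a clean scalar.

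First I would compute the left-hand side of \eqref{eq:secineq}. Substituting definition \eqref{eq:mdef} and applying \eqref{eq:d2froma} term by term,
\[
\text{tr}\ X'M(X)X=\overline{d}(X)\sum\sum\frac{w_{ij}}{d_{ij}(X)}\,\text{tr}\ X'A_{ij}X=\overline{d}(X)\sum\sum\frac{w_{ij}}{d_{ij}(X)}d_{ij}^2(X)=\overline{d}(X)\sum\sum w_{ij}d_{ij}(X)=\overline{d}(X)^2,
\]
the last equality being the definition \eqref{eq:doverdef} of \(\overline{d}\). The same substitution applied to the fixed matrix \(M(Y)\) gives
\[
\text{tr}\ X'M(Y)X=\overline{d}(Y)\sum\sum\frac{w_{ij}}{d_{ij}(Y)}d_{ij}^2(X).
\]
So the claimed inequality \eqref{eq:secineq} is equivalent to the scalar statement
\[
\left(\sum\sum w_{ij}d_{ij}(X)\right)^2\leq\left(\sum\sum w_{ij}d_{ij}(Y)\right)\left(\sum\sum\frac{w_{ij}}{d_{ij}(Y)}d_{ij}^2(X)\right).
\]

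Next I would apply Cauchy-Schwartz to the vectors with entries \(a_{ij}:=\sqrt{w_{ij}d_{ij}(Y)}\) and \(b_{ij}:=\sqrt{w_{ij}/d_{ij}(Y)}\,d_{ij}(X)\). Since \(w_{ij}\geq 0\) we have \(a_{ij}b_{ij}=w_{ij}d_{ij}(X)\), while \(\sum\sum a_{ij}^2=\overline{d}(Y)\) and \(\sum\sum b_{ij}^2\) is exactly the second factor above, so \((\sum\sum a_{ij}b_{ij})^2\leq(\sum\sum a_{ij}^2)(\sum\sum b_{ij}^2)\) is precisely the displayed inequality. Equality at \(X=Y\) is immediate by direct substitution, since both \(\text{tr}\ Y'M(Y)Y\) and \(\overline{d}(Y)^2\) coincide, confirming the stated equality case (and more generally equality holds whenever the ratios \(d_{ij}(X)/d_{ij}(Y)\) are constant across pairs).

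I do not expect a genuine obstacle here; the result is essentially a bookkeeping exercise once the self-form is seen to collapse to \(\overline{d}(X)^2\). The two points that require care are, first, keeping the summations restricted to pairs with \(d_{ij}(Y)>0\) (as stipulated after \eqref{eq:mdef}) so that \(1/d_{ij}(Y)\) is well defined, and second, noting that the inequality runs in the \emph{opposite} direction to Lemma \ref{lem:smacof}: here the quadratic \(\text{tr}\ X'M(Y)X\) majorizes \(\text{tr}\ X'M(X)X\) from above, which is exactly what is needed to replace the awkward nonlinear term \(\text{tr}\ X'M(X)X\) in \eqref{eq:omegasimple} by a plain quadratic in \(X\) during the majorization step.
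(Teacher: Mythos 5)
Your proof is correct and follows essentially the same route as the paper: both reduce the two quadratic forms to scalars (the self-form collapsing to \(\overline{d}(X)^2\)) and apply Cauchy--Schwartz with exactly the same splitting \(w_{ij}d_{ij}(X)=\sqrt{w_{ij}d_{ij}(Y)}\cdot\sqrt{w_{ij}/d_{ij}(Y)}\,d_{ij}(X)\). You merely make explicit the bookkeeping that the paper compresses into its ``trivial result'' \eqref{eq:trivial} and the final instruction to square both sides of \eqref{eq:cs2}.
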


\begin{proof}
Start with the trivial result
\begin{equation}
\sum\sum w_{ij}d_{ij}(X)=\sum\sum \frac{w_{ij}}{d_{ij}(Y)}d_{ij}(X)d_{ij}(Y).
\label{eq:trivial}
\end{equation}
By Cauchy-Schwartz
\begin{equation}
\overline{d}(X)\leq\sqrt{\sum\sum \frac{w_{ij}}{d_{ij}(Y)}d_{ij}^2(X)}\sqrt{\sum\sum \frac{w_{ij}}{d_{ij}(Y)}d_{ij}^2(Y)}
\label{eq:cs2}
\end{equation}
Squaring both sides proves the lemma.
\end{proof}

We are now ready for the main result.

\begin{theorem}
\protect\hypertarget{thm:main}{}\label{thm:main}Suppose \(\sigma_2(X^{(0)})\leq 1\). The update\\
\begin{equation}
X^{(k+1)}=\{(1-\sigma_2(X^{(k)}))V+\sigma_2(X^{(k)})M(X^{(k)})\}^+B(X^{(k)})X^{(k)}
\label{eq:update}
\end{equation}
defines a convergent majorization algorithm.
\end{theorem}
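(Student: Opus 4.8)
The plan is to run the standard majorization (MM) machinery off the three lemmas already established. By Lemma \ref{lem:dinkelbach} it suffices, with $Y=X^{(k)}$ fixed, to produce an $X^{(k+1)}$ with $\omega(X^{(k+1)},Y)<0$. I would get this by constructing a majorizer $g(\cdot,Y)$ of $\omega(\cdot,Y)$, i.e. a function satisfying $g(X,Y)\geq\omega(X,Y)$ for all $X$ together with $g(Y,Y)=\omega(Y,Y)=0$, and then taking $X^{(k+1)}$ to be its minimizer. The sandwich inequality $\omega(X^{(k+1)},Y)\leq g(X^{(k+1)},Y)\leq g(Y,Y)=0$ then supplies the required decrease, so the whole theorem reduces to minimizing one convex quadratic per step.

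To build $g$ I would begin from the expansion \eqref{eq:omegasimple} of $\omega$ in terms of $V$, $B$, and $M$, recording in particular that the $M$-term enters carrying the factor $\sigma(Y)$; this follows from the identities $\text{tr}\ X'M(X)X=\overline{d}^2(X)$ and $\eta_2^2(X)=\text{tr}\ X'VX-\text{tr}\ X'M(X)X$, which I would verify by direct expansion using $\sum\sum w_{ij}=1$. The constant $\eta^2(\Delta)$ and the term $(1-\sigma(Y))\text{tr}\ X'VX$ are kept as they are. Lemma \ref{lem:smacof} then replaces $-2\ \text{tr}\ X'B(X)X$ by the linear upper bound $-2\ \text{tr}\ X'B(Y)Y$, and Lemma \ref{lem:neweq} replaces $\sigma(Y)\text{tr}\ X'M(X)X$ by the quadratic upper bound $\sigma(Y)\text{tr}\ X'M(Y)X$. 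Summing these yields the quadratic majorizer
\[
g(X,Y)=\eta^2(\Delta)+\text{tr}\ X'\{(1-\sigma(Y))V+\sigma(Y)M(Y)\}X-2\ \text{tr}\ X'B(Y)Y,
\]
which agrees with $\omega$ at $X=Y$ since both lemmas hold with equality there.

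Minimizing $g(\cdot,Y)$ is routine: setting the gradient $2\{(1-\sigma(Y))V+\sigma(Y)M(Y)\}X-2B(Y)Y$ to zero and solving with the Moore--Penrose inverse reproduces exactly the update \eqref{eq:update}. The main obstacle, and the reason for the hypothesis $\sigma_2(X^{(0)})\leq 1$, is guaranteeing that this minimizer exists, i.e. that the quadratic part of $g$ is bounded below. Both $V$ and $M(Y)$ are positive semidefinite, each being a non-negatively weighted sum of the matrices $A_{ij}\succeq 0$ with $\overline{d}(Y)>0$; hence $(1-\sigma(Y))V+\sigma(Y)M(Y)\succeq 0$ precisely when $0\leq\sigma(Y)\leq 1$. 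For $\sigma(Y)>1$ the coefficient $1-\sigma(Y)$ turns negative and $g$ may be unbounded below, exactly the pathology flagged after Lemma \ref{lem:dinkelbach}. I would therefore check that the starting condition propagates: each step weakly decreases $\sigma_2$ by Lemma \ref{lem:dinkelbach}, so $\sigma_2(X^{(k)})\leq\sigma_2(X^{(0)})\leq 1$ for every $k$, and the update stays well defined throughout.

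Finally, for the convergence claim I would assemble the monotonicity into the usual argument: the values $\sigma_2(X^{(k)})$ form a non-increasing sequence bounded below by $0$, hence converge. Since the update equation is the vanishing-gradient condition for $g(\cdot,Y)$, and $g$ is tangent to $\omega$ at $X=Y$, a fixed point $X^{(k+1)}=X^{(k)}$ forces $\nabla_X\omega(Y,Y)=0$, which by the quotient rule is equivalent to $\nabla\sigma_2(Y)=0$; thus fixed points of \eqref{eq:update} are exactly the stationary points of $\sigma_2$. Invoking the general theory of majorization algorithms (De Leeuw (1994)) then yields subsequential convergence of the configurations to such a stationary point, modulo the translation indeterminacy absorbed by the Moore--Penrose inverse.
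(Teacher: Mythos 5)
Your proposal is correct and follows essentially the same route as the paper: your majorizer $g(X,Y)$ is exactly the paper's $\xi(X,Y)$ built from Lemmas \ref{lem:smacof} and \ref{lem:neweq}, minimized via the same Moore--Penrose update, with the same sandwich-plus-Dinkelbach argument and the same convex-combination reasoning for why $\sigma_2(X^{(0)})\leq 1$ keeps the quadratic part positive semidefinite and propagates along the iterations. Incidentally, your bookkeeping that the $M$-term enters with the factor $\sigma(Y)$ silently corrects a typo in the paper's equation \eqref{eq:omegasimple}, which drops that factor but restores it in \eqref{eq:xidef}.
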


\begin{proof}
Using the definitions in equations \eqref{eq:vdef}, \eqref{eq:bdef}, \eqref{eq:etadeltadef}, and \eqref{eq:mdef} define
\begin{equation}
\xi(X,Y):=\eta^2(\Delta)+(1-\sigma(Y))\text{tr}\ X'VX-2\text{tr}\ X'B(Y)Y+\sigma(Y)\text{tr}\ X'M(Y)X.
\label{eq:xidef}
\end{equation}
From lemmas \ref{lem:smacof} and \ref{lem:neweq} \(\omega(X,Y)\leq\xi(X,Y)\)
with equality if \(X=Y\). In particular
\begin{subequations}
\begin{equation}
\omega(X^{(k+1)},X^{(k)})\leq\xi(X^{(k+1)},X^{(k)}).
\label{eq:ineq1}
\end{equation}
The update $X^{(k+1)}$ minimizes $\xi(X,X^{(k)})$
and thus 
\begin{equation}
\xi(X^{(k+1)},X^{(k)})\leq\xi(X^{(k)},X^{(k)})=\omega(X^{(k)},X^{(k)}).
\label{eq:ineq2}
\end{equation}
\end{subequations}
Combining equations \eqref{eq:ineq1} and \eqref{eq:ineq2}, and using lemma \ref{lem:dinkelbach}, shows that also \(\sigma_2(X^{(k+1)})\leq\sigma_2(X^{(k)})\).
\end{proof}

In order to make our proof work we had to guarantee that for all \(k\)
\begin{equation}
(1-\sigma_2(X^{(k)}))V+\sigma_2(X^{(k)})M(X^{(k)})\gtrsim 0,
\label{eq:psd}
\end{equation}
because otherwise the minimum of \(\xi(\bullet,X^{(k)})\) does not exist. If \(\sigma_2(X^{(k)})\leq 1\) the matrix in inequality \eqref{eq:psd} is a convex combination of two positive semi-definite matrices, and is thus
positive semi-definite. And because of theorem \ref{thm:main} it is sufficient to assume that
\(\sigma_2(X^{(0)})\leq 1\), because subsequent \(X^{(k)}\) will have \(\sigma_2\) values
smaller than the value for \(X^{(0)}\). Thus we need to start our majorization algorithm with
a sufficiently good initial estimate of \(X\). A random start may not work.

From the practical point of view the condition \(\sigma_2(X^{(0)})\leq 1\) is not
really restrictive. As the introduction of this paper says, in metric MDS we do not
use \(\sigma_2\). But even in metric MDS the Torgerson initial estimate usually
takes \(\sigma_2\) well below one. In non-linear or non-metric scaling the \(\delta_{ij}\) are optimal transformations or quantifications. If the optimum transformation is better
than the optimal constant transformation the condition \(\sigma_2(X^{(k)})\leq 1\)
is automatically satisfied for all \(k\). And even if the optimum transformation is
the constant transformation we still have \(\sigma_2(X^{(k)})=1\) and
inequality \eqref{eq:psd} is satisfied.

If for some reason you want to proceed if the matrix in \eqref{eq:psd} is
not positive semi-definite, then it suffices to choose any \(Y\) with
\begin{equation}
\text{tr}\ Y'\{((1-\sigma_2(X^{(k)}))V+\sigma_2(X^{(k)})M(X^{(k)}))\}Y\geq 0
\label{eq:emergency}
\end{equation}
and to minimize \(\xi(X^{(k)}+\alpha Y,X^{(k)})\) over \(\alpha\).

\section{Derivatives}\label{derivatives}

The derivatives of \(\sigma_2\) are
\begin{equation}
\mathcal{D}\sigma_2(X)=\frac{\mathcal{D}\sigma_R(X)-\sigma_2(X)\mathcal{D}\eta^2_2(X)}{
\eta_2^2(X)}
\label{eq:s2deriv}
\end{equation}
Now
\begin{subequations}
\begin{align}
\mathcal{D}\sigma_R(X)&=-2\sum\sum w_{ij}(\delta_{ij}-d_{ij}(X))\mathcal{D}d_{ij}(X),\\
\mathcal{D}\eta_2^2(X)&=2\sum\sum w_{ij}\mathcal{D}d_{ij}^2(X) -2\overline{d}(X)\sum\sum w_{ij}\mathcal{D}d_{ij}(X),
\end{align}
\end{subequations}
and
\begin{equation}
\mathcal{D}d_{ij}(X)=\frac{1}{d_{ij}(X)}A_{ij}X.
\label{eq:dderiv}
\end{equation}
And thus, using definitions \eqref{eq:vdef}, \eqref{eq:bdef}, and \eqref{eq:mdef}
\begin{subequations}
\begin{align}
\mathcal{D}\sigma_R(X)&=2(V-B(X))X,\\
\mathcal{D}\eta_2^2(X)&=2(V-M(X))X.
\end{align}
\end{subequations}

It follows that \(\mathcal{D}\sigma_2(X)=0\) if and only if
\begin{equation}
X=\{(1-\sigma_2(X))V+\sigma_2(X)M(X)\}^+B(X)X.
\label{eq:stationary}
\end{equation}
We can summarize the results of our computations in this section.

\begin{theorem}
\(X\) is a fixed point of the majorization iterations \eqref{eq:update} if and only if \(\mathcal{D}\sigma_2(X)=0\).
\end{theorem}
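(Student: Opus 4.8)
The plan is to recognize that this theorem is essentially a bookkeeping consequence of the derivative calculation just carried out, combined with a careful treatment of the Moore--Penrose inverse. First I would observe that $X$ is a fixed point of the map \eqref{eq:update} exactly when $X^{(k+1)}=X^{(k)}=X$, which is literally the stationarity equation \eqref{eq:stationary}. Thus the theorem reduces to showing that \eqref{eq:stationary} holds if and only if $\mathcal{D}\sigma_2(X)=0$.

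Writing $C(X):=(1-\sigma_2(X))V+\sigma_2(X)M(X)$ for brevity, the formulas for $\mathcal{D}\sigma_R(X)$ and $\mathcal{D}\eta_2^2(X)$ collected in this section, substituted into \eqref{eq:s2deriv}, show after clearing the strictly positive factor $\eta_2^2(X)$ that $\mathcal{D}\sigma_2(X)=0$ is equivalent to the linear matrix equation $C(X)X=B(X)X$. So the real content is the equivalence $C(X)X=B(X)X \iff X=C(X)^+B(X)X$.

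For this I would use that every $A_{ij}$ annihilates the constant vector $\mathbf{1}$, since $(e_i-e_j)'\mathbf{1}=0$. Hence $V\mathbf{1}=M(X)\mathbf{1}=B(X)\mathbf{1}=C(X)\mathbf{1}=0$, so the columns of $VX$, $B(X)X$, $M(X)X$, and $C(X)X$ all lie in $\{\mathbf{1}\}^\perp$. Because the distances $d_{ij}(X)$ are invariant under translation of the configuration, I may assume without loss of generality that $X$ is column-centered, placing its columns in $\{\mathbf{1}\}^\perp$ as well. Assuming the graph of positive weights is connected, $C(X)$ has rank $n-1$ with range exactly $\{\mathbf{1}\}^\perp$, so $C(X)^+C(X)=C(X)C(X)^+=J$, the orthogonal projector onto $\{\mathbf{1}\}^\perp$. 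Premultiplying $C(X)X=B(X)X$ by $C(X)^+$ then gives $JX=C(X)^+B(X)X$, and $JX=X$ by centering, which is \eqref{eq:stationary}; conversely, premultiplying $X=C(X)^+B(X)X$ by $C(X)$ gives $C(X)X=JB(X)X=B(X)X$, since $B(X)X$ already lies in $\{\mathbf{1}\}^\perp$.

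The routine derivative algebra is not the obstacle; the one step requiring genuine care is the pseudoinverse manipulation, where I must verify that the ranges align so that $C(X)^+C(X)$ and $C(X)C(X)^+$ act as the identity on the relevant centered subspace. This is precisely where the connectivity (rank $n-1$) assumption and the translation-invariance normalization enter, and it is the part I would write out most explicitly.
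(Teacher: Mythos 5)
Your proposal is correct and follows essentially the same route as the paper: the paper offers no separate proof of this theorem beyond the remark that it ``summarizes the results of our computations in this section,'' i.e., the derivative formulas showing $\mathcal{D}\sigma_2(X)=0$ if and only if $\{(1-\sigma_2(X))V+\sigma_2(X)M(X)\}X=B(X)X$, followed by an unexplained passage to the pseudoinverse form \eqref{eq:stationary}, which is exactly the fixed-point condition for \eqref{eq:update}. Where you go beyond the paper is in the pseudoinverse step itself: your observations that all the matrices annihilate $\mathbf{1}$, that connectivity of the weight graph gives rank $n-1$, and that $C(X)^+C(X)$ is the centering projector $J$ make rigorous a step the paper asserts silently, and your column-centering normalization is genuinely necessary --- an uncentered $X$ with $\mathcal{D}\sigma_2(X)=0$ is mapped by the update to its centered translate $JX\neq X$, so the equivalence only holds modulo translation. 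This added care fills a real gap in the paper's exposition rather than deviating from its approach.
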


\section{Examples}\label{examples}

Although we mentioned in the introduction that it is unusual to use
\(\sigma_2\) in metric MDS problems we will nevertheless give some metric
examples to illustrate the algorithm. In both examples we start with
the Torgerson initial solution which takes the initial \(\sigma_2\) way below
one.

\subsection{Ekman}\label{ekman}

Our first example are the obligatory color data from Ekman (1954). The stress2
program produces the following sequence of \(\sigma_2\) values and
converges in 28 iterations.

\begin{verbatim}
## itel  1 sold  0.1577255150 snew  0.1321216983 
## itel  2 sold  0.1321216983 snew  0.1207395499 
## itel  3 sold  0.1207395499 snew  0.1156260670 
## itel  4 sold  0.1156260670 snew  0.1135043532 
## itel  5 sold  0.1135043532 snew  0.1126543441 
## itel  6 sold  0.1126543441 snew  0.1123159800 
## itel  7 sold  0.1123159800 snew  0.1121798388 
## itel  8 sold  0.1121798388 snew  0.1121239038 
## itel  9 sold  0.1121239038 snew  0.1121002964 
## itel  10 sold  0.1121002964 snew  0.1120900307 
## itel  11 sold  0.1120900307 snew  0.1120854276 
## itel  12 sold  0.1120854276 snew  0.1120833009 
## itel  13 sold  0.1120833009 snew  0.1120822904 
## itel  14 sold  0.1120822904 snew  0.1120817979 
## itel  15 sold  0.1120817979 snew  0.1120815523 
## itel  16 sold  0.1120815523 snew  0.1120814273 
## itel  17 sold  0.1120814273 snew  0.1120813627 
## itel  18 sold  0.1120813627 snew  0.1120813287 
## itel  19 sold  0.1120813287 snew  0.1120813107 
## itel  20 sold  0.1120813107 snew  0.1120813010 
## itel  21 sold  0.1120813010 snew  0.1120812957 
## itel  22 sold  0.1120812957 snew  0.1120812929 
## itel  23 sold  0.1120812929 snew  0.1120812913 
## itel  24 sold  0.1120812913 snew  0.1120812904 
## itel  25 sold  0.1120812904 snew  0.1120812899 
## itel  26 sold  0.1120812899 snew  0.1120812897 
## itel  27 sold  0.1120812897 snew  0.1120812895 
## itel  28 sold  0.1120812895 snew  0.1120812894
\end{verbatim}

The optimum configuration is in figure \ref{fig:ekfig1}, which can be compared with the
solution minimizing raw stress (which is identical up to a scale factor with the solution
minimizing stress formula one) in figure \ref{fig:ekfig2}. The raw stress solution
reaches stress formula one equal to 0.5278528 in 32 iterations. The two optimal
configurations are virtually identical.

\begin{figure}

{\centering \includegraphics{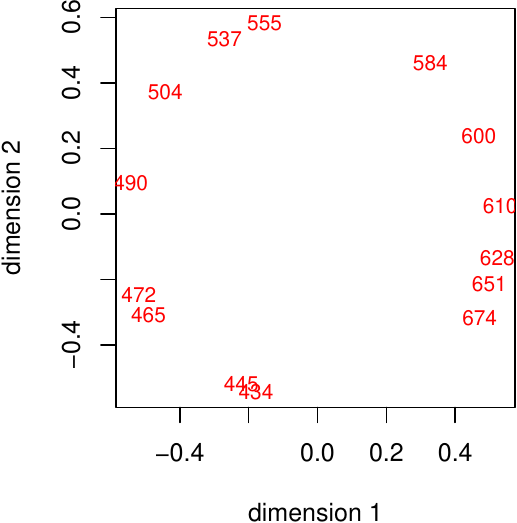} 

}

\caption{Ekman Metric Stress 2 Solution}\label{fig:ekfig1}
\end{figure}

\begin{figure}

{\centering \includegraphics{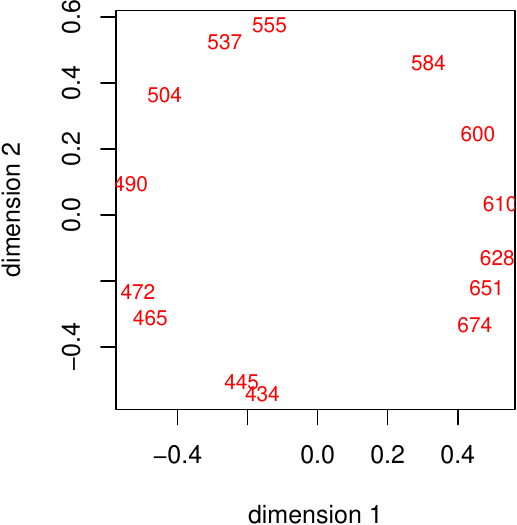} 

}

\caption{Ekman Metric Raw Stress Solution}\label{fig:ekfig2}
\end{figure}

\subsection{De Gruijter}\label{de-gruijter}

The Ekman data have an excellent fit in two dimensions and the optimum configuration is
extremely stable over variations in MDS methods. The data from De Gruijter (1967) on the
similarities between nine Dutch political parties in 1966 have a worse fit, and
less stability.

The solution minimizing \(\sigma)2\) has a loss of 0.3482919 and uses 230 iterations.
Minimizing raw stress finds stress 9.4408856 and uses 244 iterations. The optimal
configurations in figures \ref{fig:grfig1} and \ref{fig:grfig2} are similar, but definitely
not the same. Specifically the position of D66 (a ``pragmatic'' party, ideologically neither left nor right, established only in 1966, i.e.~in the year of the De Gruijter study) differs a lot between solutions.

\begin{figure}

{\centering \includegraphics{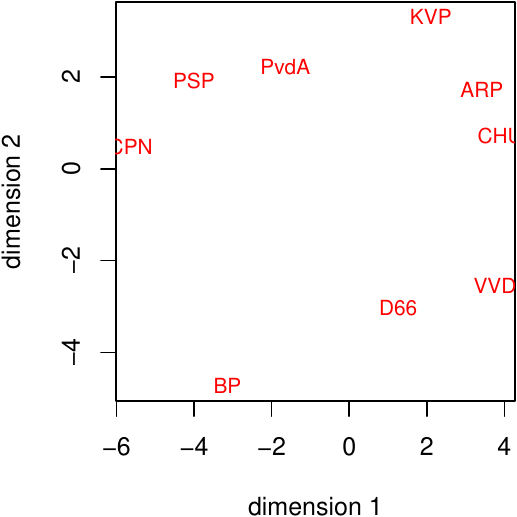} 

}

\caption{Gruijter Metric Stress 2 Solution}\label{fig:grfig1}
\end{figure}

\begin{figure}

{\centering \includegraphics{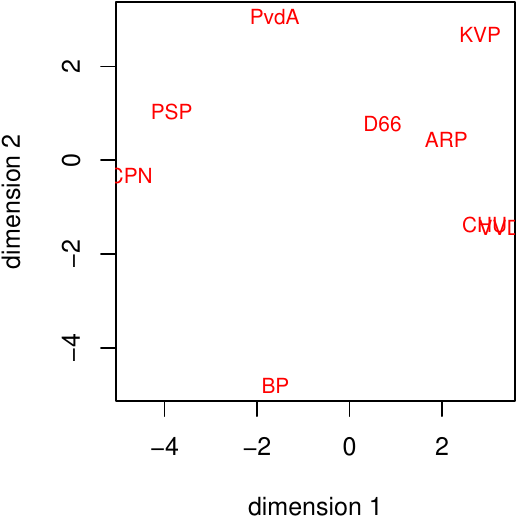} 

}

\caption{Gruijter Metric Raw Stress Solution}\label{fig:grfig2}
\end{figure}

\section{Appendix: Code}\label{appendix-code}

\subsection{stress2.R}\label{stress2.r}

\begin{Shaded}
\begin{Highlighting}[]
\NormalTok{stress2 }\OtherTok{\textless{}{-}}
  \ControlFlowTok{function}\NormalTok{(delta,}
           \AttributeTok{wmat =} \DecValTok{1} \SpecialCharTok{{-}} \FunctionTok{diag}\NormalTok{(}\FunctionTok{nrow}\NormalTok{(delta)),}
           \AttributeTok{ndim =} \DecValTok{2}\NormalTok{,}
           \AttributeTok{itmax =} \DecValTok{1000}\NormalTok{,}
           \AttributeTok{eps =} \FloatTok{1e{-}10}\NormalTok{,}
           \AttributeTok{verbose =} \ConstantTok{TRUE}\NormalTok{) \{}
\NormalTok{    itel }\OtherTok{\textless{}{-}} \DecValTok{1}
\NormalTok{    n }\OtherTok{\textless{}{-}} \FunctionTok{nrow}\NormalTok{(delta)}
\NormalTok{    wmat }\OtherTok{\textless{}{-}}\NormalTok{ wmat }\SpecialCharTok{/} \FunctionTok{sum}\NormalTok{(wmat)}
\NormalTok{    vmat }\OtherTok{\textless{}{-}} \SpecialCharTok{{-}}\NormalTok{wmat}
    \FunctionTok{diag}\NormalTok{(vmat) }\OtherTok{\textless{}{-}} \SpecialCharTok{{-}}\FunctionTok{rowSums}\NormalTok{(vmat)}
\NormalTok{    xold }\OtherTok{\textless{}{-}} \FunctionTok{torgerson}\NormalTok{(delta, ndim)}
\NormalTok{    dold }\OtherTok{\textless{}{-}} \FunctionTok{as.matrix}\NormalTok{(}\FunctionTok{dist}\NormalTok{(xold))}
\NormalTok{    enum }\OtherTok{\textless{}{-}} \FunctionTok{sum}\NormalTok{(wmat }\SpecialCharTok{*}\NormalTok{ delta }\SpecialCharTok{*}\NormalTok{ dold)}
\NormalTok{    eden }\OtherTok{\textless{}{-}} \FunctionTok{sum}\NormalTok{(wmat }\SpecialCharTok{*}\NormalTok{ dold }\SpecialCharTok{\^{}} \DecValTok{2}\NormalTok{)}
\NormalTok{    lbda }\OtherTok{\textless{}{-}}\NormalTok{ enum }\SpecialCharTok{/}\NormalTok{ eden}
\NormalTok{    dold }\OtherTok{\textless{}{-}}\NormalTok{ lbda }\SpecialCharTok{*}\NormalTok{ dold}
\NormalTok{    xold }\OtherTok{\textless{}{-}}\NormalTok{ lbda }\SpecialCharTok{*}\NormalTok{ xold}
\NormalTok{    aold }\OtherTok{\textless{}{-}} \FunctionTok{sum}\NormalTok{(wmat }\SpecialCharTok{*}\NormalTok{ dold)}
\NormalTok{    sold }\OtherTok{\textless{}{-}} \FunctionTok{sum}\NormalTok{(wmat }\SpecialCharTok{*}\NormalTok{ (delta }\SpecialCharTok{{-}}\NormalTok{ dold) }\SpecialCharTok{\^{}} \DecValTok{2}\NormalTok{) }\SpecialCharTok{/} \FunctionTok{sum}\NormalTok{(wmat }\SpecialCharTok{*}\NormalTok{ (dold }\SpecialCharTok{{-}}\NormalTok{ aold) }\SpecialCharTok{\^{}} \DecValTok{2}\NormalTok{)}
    \ControlFlowTok{repeat}\NormalTok{ \{}
\NormalTok{      mmat }\OtherTok{\textless{}{-}} \SpecialCharTok{{-}}\NormalTok{aold }\SpecialCharTok{*}\NormalTok{ wmat }\SpecialCharTok{/}\NormalTok{ (dold }\SpecialCharTok{+} \FunctionTok{diag}\NormalTok{(n))}
      \FunctionTok{diag}\NormalTok{(mmat) }\OtherTok{\textless{}{-}} \SpecialCharTok{{-}}\FunctionTok{rowSums}\NormalTok{(mmat)}
\NormalTok{      bmat }\OtherTok{\textless{}{-}} \SpecialCharTok{{-}}\NormalTok{wmat }\SpecialCharTok{*}\NormalTok{ delta }\SpecialCharTok{/}\NormalTok{ (dold }\SpecialCharTok{+} \FunctionTok{diag}\NormalTok{(n))}
      \FunctionTok{diag}\NormalTok{(bmat) }\OtherTok{\textless{}{-}} \SpecialCharTok{{-}}\FunctionTok{rowSums}\NormalTok{(bmat)}
\NormalTok{      umat }\OtherTok{\textless{}{-}}\NormalTok{ ((}\DecValTok{1} \SpecialCharTok{{-}}\NormalTok{ sold) }\SpecialCharTok{*}\NormalTok{ vmat) }\SpecialCharTok{+}\NormalTok{ (sold }\SpecialCharTok{*}\NormalTok{ mmat)}
\NormalTok{      uinv }\OtherTok{\textless{}{-}} \FunctionTok{solve}\NormalTok{(umat }\SpecialCharTok{+} \DecValTok{1}\SpecialCharTok{/}\NormalTok{n) }\SpecialCharTok{{-}} \DecValTok{1}\SpecialCharTok{/}\NormalTok{n}
\NormalTok{      xnew }\OtherTok{\textless{}{-}}\NormalTok{ uinv }\SpecialCharTok{\%*\%}\NormalTok{ bmat }\SpecialCharTok{\%*\%}\NormalTok{ xold}
\NormalTok{      dnew }\OtherTok{\textless{}{-}} \FunctionTok{as.matrix}\NormalTok{(}\FunctionTok{dist}\NormalTok{(xnew))}
\NormalTok{      anew }\OtherTok{\textless{}{-}} \FunctionTok{sum}\NormalTok{(wmat }\SpecialCharTok{*}\NormalTok{ dnew)}
\NormalTok{      snew }\OtherTok{\textless{}{-}} \FunctionTok{sum}\NormalTok{(wmat }\SpecialCharTok{*}\NormalTok{ (delta }\SpecialCharTok{{-}}\NormalTok{ dnew) }\SpecialCharTok{\^{}} \DecValTok{2}\NormalTok{) }\SpecialCharTok{/} \FunctionTok{sum}\NormalTok{(wmat }\SpecialCharTok{*}\NormalTok{ (dnew }\SpecialCharTok{{-}}\NormalTok{ anew) }\SpecialCharTok{\^{}} \DecValTok{2}\NormalTok{)}
      \ControlFlowTok{if}\NormalTok{ (verbose) \{}
        \FunctionTok{cat}\NormalTok{(}
          \StringTok{"itel "}\NormalTok{,}
          \FunctionTok{formatC}\NormalTok{(itel, }\AttributeTok{format =} \StringTok{"d"}\NormalTok{),}
          \StringTok{"sold "}\NormalTok{,}
          \FunctionTok{formatC}\NormalTok{(sold, }\AttributeTok{digits =} \DecValTok{10}\NormalTok{, }\AttributeTok{format =} \StringTok{"f"}\NormalTok{),}
          \StringTok{"snew "}\NormalTok{,}
          \FunctionTok{formatC}\NormalTok{(snew, }\AttributeTok{digits =} \DecValTok{10}\NormalTok{, }\AttributeTok{format =} \StringTok{"f"}\NormalTok{),}
          \StringTok{"}\SpecialCharTok{\textbackslash{}n}\StringTok{"}
\NormalTok{        )}
\NormalTok{      \}}
      \ControlFlowTok{if}\NormalTok{ ((itel }\SpecialCharTok{==}\NormalTok{ itmax) }\SpecialCharTok{||}\NormalTok{ ((sold }\SpecialCharTok{{-}}\NormalTok{ snew) }\SpecialCharTok{\textless{}}\NormalTok{ eps)) \{}
        \ControlFlowTok{break}
\NormalTok{      \}}
\NormalTok{      sold }\OtherTok{\textless{}{-}}\NormalTok{ snew}
\NormalTok{      dold }\OtherTok{\textless{}{-}}\NormalTok{ dnew}
\NormalTok{      xold }\OtherTok{\textless{}{-}}\NormalTok{ xnew}
\NormalTok{      aold }\OtherTok{\textless{}{-}}\NormalTok{ anew}
\NormalTok{      itel }\OtherTok{\textless{}{-}}\NormalTok{ itel }\SpecialCharTok{+} \DecValTok{1}
\NormalTok{    \}}
    \FunctionTok{return}\NormalTok{(}\FunctionTok{list}\NormalTok{(}
      \AttributeTok{x =}\NormalTok{ xnew,}
      \AttributeTok{s =}\NormalTok{ snew,}
      \AttributeTok{d =}\NormalTok{ dnew,}
      \AttributeTok{b =}\NormalTok{ bmat,}
      \AttributeTok{m =}\NormalTok{ mmat,}
      \AttributeTok{w =}\NormalTok{ wmat,}
      \AttributeTok{a =}\NormalTok{ anew,}
      \AttributeTok{u =}\NormalTok{ umat,}
      \AttributeTok{itel =}\NormalTok{ itel}
\NormalTok{    ))}
\NormalTok{  \}}

\NormalTok{torgerson }\OtherTok{\textless{}{-}} \ControlFlowTok{function}\NormalTok{(delta, ndim) \{}
\NormalTok{  dd }\OtherTok{\textless{}{-}}\NormalTok{ delta }\SpecialCharTok{\^{}} \DecValTok{2}
\NormalTok{  rd }\OtherTok{\textless{}{-}} \FunctionTok{apply}\NormalTok{(dd, }\DecValTok{1}\NormalTok{, mean)}
\NormalTok{  rr }\OtherTok{\textless{}{-}} \FunctionTok{mean}\NormalTok{(dd)}
\NormalTok{  cc }\OtherTok{\textless{}{-}} \SpecialCharTok{{-}}\NormalTok{.}\DecValTok{5} \SpecialCharTok{*}\NormalTok{ (dd }\SpecialCharTok{{-}} \FunctionTok{outer}\NormalTok{(rd, rd, }\StringTok{"+"}\NormalTok{) }\SpecialCharTok{+}\NormalTok{ rr)}
\NormalTok{  ec }\OtherTok{\textless{}{-}} \FunctionTok{eigen}\NormalTok{(cc)}
\NormalTok{  xx }\OtherTok{\textless{}{-}}\NormalTok{ ec}\SpecialCharTok{$}\NormalTok{vectors[, }\DecValTok{1}\SpecialCharTok{:}\NormalTok{ndim] }\SpecialCharTok{\%*\%} \FunctionTok{diag}\NormalTok{(}\FunctionTok{sqrt}\NormalTok{(ec}\SpecialCharTok{$}\NormalTok{values[}\DecValTok{1}\SpecialCharTok{:}\NormalTok{ndim]))}
  \FunctionTok{return}\NormalTok{(xx)}
\NormalTok{\}}
\end{Highlighting}
\end{Shaded}

\section*{References}\label{references}
\addcontentsline{toc}{section}{References}

\phantomsection\label{refs}
\begin{CSLReferences}{1}{0}
\bibitem[\citeproctext]{ref-coombs_64}
Coombs, C. H. 1964. \emph{{A Theory of Data}}. Wiley.

\bibitem[\citeproctext]{ref-degruijter_67}
De Gruijter, D. N. M. 1967. {``{The Cognitive Structure of Dutch Political Parties in 1966}.''} Report E019-67. Psychological Institute, University of Leiden.

\bibitem[\citeproctext]{ref-deleeuw_C_77}
De Leeuw, J. 1977. {``Applications of Convex Analysis to Multidimensional Scaling.''} In \emph{Recent Developments in Statistics}, edited by J. R. Barra, F. Brodeau, G. Romier, and B. Van Cutsem, 133--45. Amsterdam, The Netherlands: North Holland Publishing Company.

\bibitem[\citeproctext]{ref-deleeuw_A_88b}
---------. 1988. {``Convergence of the Majorization Method for Multidimensional Scaling.''} \emph{Journal of Classification} 5: 163--80.

\bibitem[\citeproctext]{ref-deleeuw_C_94c}
---------. 1994. {``{Block Relaxation Algorithms in Statistics}.''} In \emph{Information Systems and Data Analysis}, edited by H. H. Bock, W. Lenski, and M. M. Richter, 308--24. Berlin: Springer Verlag. \url{https://jansweb.netlify.app/publication/deleeuw-c-94-c/deleeuw-c-94-c.pdf}.

\bibitem[\citeproctext]{ref-deleeuw_heiser_C_77}
De Leeuw, J., and W. J. Heiser. 1977. {``Convergence of Correction Matrix Algorithms for Multidimensional Scaling.''} In \emph{Geometric Representations of Relational Data}, edited by J. C. Lingoes, 735--53. Ann Arbor, Michigan: Mathesis Press.

\bibitem[\citeproctext]{ref-deleeuw_mair_A_09c}
De Leeuw, J., and P. Mair. 2009. {``{Multidimensional Scaling Using Majorization: SMACOF in R}.''} \emph{Journal of Statistical Software} 31 (3): 1--30. \url{https://www.jstatsoft.org/article/view/v031i03}.

\bibitem[\citeproctext]{ref-deleeuw_stoop_A_84}
De Leeuw, J., and I. Stoop. 1984. {``Upper Bounds for Kruskal's Stress.''} \emph{Psychometrika} 49: 391--402.

\bibitem[\citeproctext]{ref-dinkelbach_67}
Dinkelbach, W. 1967. {``{On Nonlinear Fractional Programming}.''} \emph{Management Science} 13: 492--98.

\bibitem[\citeproctext]{ref-ekman_54}
Ekman, G. 1954. {``{Dimensions of Color Vision}.''} \emph{Journal of Psychology} 38: 467--74.

\bibitem[\citeproctext]{ref-kruskal_64a}
Kruskal, J. B. 1964a. {``{Multidimensional Scaling by Optimizing Goodness of Fit to a Nonmetric Hypothesis}.''} \emph{Psychometrika} 29: 1--27.

\bibitem[\citeproctext]{ref-kruskal_64b}
---------. 1964b. {``{Nonmetric Multidimensional Scaling: a Numerical Method}.''} \emph{Psychometrika} 29: 115--29.

\bibitem[\citeproctext]{ref-kruskal_65}
---------. 1965. {``{Analysis of Factorial Experiments by Estimating Monotone Transformations of the Data}.''} \emph{Journal of the Royal Statistical Society} B27: 251--63.

\bibitem[\citeproctext]{ref-kruskal_carroll_69}
Kruskal, J. B., and J. D. Carroll. 1969. {``{Geometrical Models and Badness of Fit Functions}.''} In \emph{Multivariate Analysis, Volume II}, edited by P. R. Krishnaiah, 639--71. North Holland Publishing Company.

\bibitem[\citeproctext]{ref-lange_16}
Lange, K. 2016. \emph{MM Optimization Algorithms}. SIAM.

\bibitem[\citeproctext]{ref-mair_groenen_deleeuw_A_22}
Mair, P., P. J. F. Groenen, and J. De Leeuw. 2022. {``{More on Multidimensional Scaling in R: smacof Version 2}.''} \emph{Journal of Statistical Software} 102 (10): 1--47. \url{https://www.jstatsoft.org/article/view/v102i10}.

\bibitem[\citeproctext]{ref-roskam_68}
Roskam, E. E. 1968. {``{Metric Analysis of Ordinal Data in Psychology}.''} PhD thesis, University of Leiden.

\end{CSLReferences}

\end{document}